\newtheorem{theorem}{Theorem}
\newtheorem{definition}[theorem]{Definition}
\newtheorem{lemma}[theorem]{Lemma}
\newtheorem{remark}[theorem]{Remark}
\newenvironment{proof}[1][Proof]{\noindent\textbf{#1.} }{\ \rule{0.5em}{0.5em}}
\begin{document}

\title{Mean-field delayed BSDEs in finite and infinite horizon}
\author{\ Nacira AGRAM \thanks{
University Med Khider, Po. Box 145, Biskra $\left( 07000\right) $ Algeria.
Email: agramnacira@yahoo.fr } \thanks{%
Nacira Agram wants to thank the rector of the University of Biskra,
Professor Belkacem Selatnia, for making it possible for her to study outside
of Algeria. } and Elin Engen R{\normalsize \o }se \thanks{%
Department of mathematics, University of Oslo, Box 1053 Blindern, N-0316
Oslo, Norway. Email: elinero@math.uio.no}\ \  \and \textit{Dedicated to
Belkacem Selatnia\ for his birthday\ }}
\date{\ 29 September 2015}
\maketitle

\begin{abstract}
We establish sufficient conditions for the existence and uniqueness of
different types of delayed BSDEs in finite time horizon. We consider then
infinite horizon, replacing the terminal value condition in the finite
horizon case with a condition of strong decay at infinity.
\end{abstract}

\textit{Keywords:} Backward stochastic differential equations; Time delayed
generator; Mean-field; Poisson random measure.

\section{Introduction}

Recently, Lasry and Lions in \cite{LL}, introduced a mathematical mean-field
approach for high dimensional systems that involve a large number of
particles (if we deal with statistical mechanics, physics, quantum mechanics
and quantum chemistry) or agents (if we deal with economics, finance and
game theory). After them, a lot of works have been done in mean-field
problems especially in optimal control and games theory. In what follows, we
have to consider a probability space $\left( \Omega ,\mathcal{F},P\right) $
with the filtration $(\mathcal{F}_{t})_{t\geq 0},$ generated by the Brownian
motion $B$ and an independent compensated Poisson random measure $\tilde{N}%
(dt,d\zeta ).$

In the paper \cite{HOS}, the authors consider a general stochastic optimal
control of mean-field problems where the mean-field term appears as a
function $F:L^{2}(P)\rightarrow
\mathbb{R}
,$ which $F$ is a Fr\'{e}chet differentiable in $L^{2}(P).$ This includes
the case of a mean-field function $F(x)=F(P(x))$ where the function $F$ is
continuously differentiable w.r.t. the measure, this case was studied in the
paper \cite{CD} and the references there in.

Buckdahn et al in \cite{B1} studied a mean-field backward stochastic
differential equation (MFBSDE) driven by a forward stochastic differential
equation (FSDE) of Mackean-Vlasov type. They proved that the triplet $%
(X^{N},Y^{N},Z^{N})$ of $N$ independent copies, converges in law to the
solution of some forward-backward stochastic differential equation of
mean-field type, which is not only governed by a Brownian motion but also by
an independent Gaussian field.

On the other hand, if we want to find an investment strategy and an
investment portfolio which should replicate a liability or meet a target
which depend on the applied strategy or the portfolio depends on its past
values. In this setting, a managed investment portfolio serves
simultaneously as the underlying security on which the liability/target is
contingent and as a replicating portfolio for that liability/target. This is
usually the case for capital-protected investments and performance-linked
pay-offs. The delayed backward stochastic differential equations (DBSDEs)
are the best tool to solve this financial problems. DBSDE can also arise in
variable annuities, unit-linked products and participating contract. For
more applications of such equations, we refer to \cite{D1} and \cite{D2}.

For the more, a possible application of BSDE should be a recursive utility
associated to the consumption rate $"\pi ".$ It was introduced by Duffie and
Epstien in \cite{DE} where they proposed to use the recursive utility
process $Y$ as a part of a solution associated to the following BSDE in the
Brownian case:%
\begin{equation}
dY(t)=-f(t,Y(t),Z(t),\pi (t))dt+Z(t)dB(t),\text{ \ \ \ \ \ }t\in \left[ 0,T%
\right] .
\end{equation}

In our setting, we consider a new type of recursive utilities which is a
mean-field delayed BSDE (MFDBSDE) with jumps and the time horizon is
possibly infinite, then%
\begin{equation}
\begin{array}{c}
dY(t)=-f(t,Y_{t},Z_{t},K_{t}(\cdot ),\mathbb{E}[Y(t)],\mathbb{E}[Z(t)],%
\mathbb{E}[K(t,\cdot )],\pi (t))dt+Z(t)dB(t) \\
+\int_{%
\mathbb{R}
_{0}}K(t,\zeta )\tilde{N}(dt,d\zeta ),t\in \left[ 0,\tau \right] \text{ for }%
\tau \leq \infty .%
\end{array}
\label{1st}
\end{equation}

Here
\begin{equation*}
\begin{array}{l}
Y_{t}=Y(t+s), \\
Z_{t}=Z(t+s), \\
K_{t}(\cdot )=K(t+s,\cdot ),%
\end{array}%
\end{equation*}%
for a given $s\in \left[ -\delta ,0\right] .$

Now, for any stochastic process $X$ is a solution of a stochastic functional
differential equation, let the bold $\mathbf{X}$ denotes
\begin{equation*}
\mathbf{X}(t):=\Big(\int_{-\delta }^{0}X(t+s)\mu _{1}(ds),\ldots
,\int_{-\delta }^{0}X(t+s)\mu _{N}(ds)\Big),
\end{equation*}%
where each $\mu _{i}$ is a bounded Borel measure, which is either absolutely
continuous w.r.t. the Lebesgue measure or a Dirac measure $\delta _{t_{0}}$
for some $t_{0}\in \lbrack -\delta ,0].$ Dealing with the maximum principle,
the authors in the paper \cite{AR}, consider a control problem involving
decoupled system of finite and infinite time horizon mean-field forward
backward stochastic differential equations (MFFBSDE). The backward part of
these equations in \cite{AR}, has the form (together with either a terminal
condition or a decay condition, depending on the given time horizon):

\textit{The state equation}%
\begin{equation}
\begin{array}{c}
dY(t)=-f(t,\mathbf{X}(t),Y(t),Z(t),K(t,\cdot ),\mathbb{E}[\mathbf{X}(t)],%
\mathbb{E}[Y(t)],\pi (t))dt+Z(t)dB(t) \\
+\int_{%
\mathbb{R}
_{0}}K(t,\zeta )\tilde{N}(dt,d\zeta ),t\in \lbrack 0,\infty ),%
\end{array}
\label{EQ1}
\end{equation}

\textit{The corresponding derivative process}

{\normalsize
\begin{equation}
\begin{split}
d\mathcal{Y}(t)& =\Big(-\nabla g(t,\pi )\Big)\cdot \Big(\mathcal{X}(t),%
\mathcal{Y}(t),\mathbb{E}[\mathcal{X}(t)],\mathbb{E}[\mathcal{Y}(t)],%
\mathcal{Z}(t),\mathcal{K}(t,\cdot ),\eta (t)\Big)^{\mathsf{T}}dt \\
& +\mathcal{Z}(t)dB(t)+\int_{%
\mathbb{R}
_{0}}\mathcal{K}(t,\zeta )\tilde{N}(dt,d\zeta ),t\in \lbrack 0,\infty ),
\end{split}
\label{eq:DerivativeYZK}
\end{equation}%
}where $X$ and $\mathcal{X}$ are solutions of forward equations that does
not depend on $Y,Z,K$ and $\mathcal{Y},\mathcal{Z},\mathcal{K},$
respectively and we denote by $\left( \cdot \right) ^{T}$ the transpose.

In this note, we establish sufficient conditions for existence and
uniqueness for such equations, in both finite and infinite time horizon.

\paragraph{Finite time horizon}

In section $2,$ we consider different types of delayed backward stochastic
differential equation (DBSDE). In the mean-field delay equation $\left( \ref%
{1st}\right) ,$ we allow the coefficient functionals to depend on the
segments of the process in a similar manner as in the SFDEs in \cite{SEM},
\cite{AR}, and the Backward stochastic functional differential equations
(BSFDEs) in \cite{CP}. Also related to our equation, are the BSDEs with time
delayed generator in \cite{DI2}. The authors in \cite{DI2} consider
equations similar to ours, only with the Poisson random measures from $%
\left( \ref{1st}\right) $ replaced by the random measure $\tilde{M}%
(dt,d\zeta ):=\zeta \tilde{N}(dt,d\zeta ).$

\paragraph{Infinite time horizon}

In section $3,$ we study the infinite horizon case of the MFDBSDE $\left( %
\ref{1st}\right) .$ It can be seen as an extension of Theorem $3.1$ in \cite%
{HOP} to mean-field with delay.

\section{Finite time horizon}

Let $B(t),t\geq 0$ be a Brownian motion, and $\tilde{N}(dt,d\zeta ),t\geq 0$
be an independent compensated Poisson random measure, with compensator $\nu
(\zeta )dt,$ on a probability space $(\Omega ,\mathcal{F},P).$ Let $(%
\mathcal{F}_{t})_{t\geq 0}$ denote the natural filtration associated with $B$
and $N.$ Let $\delta >0,$ and extend the filtration by letting $\mathcal{F}%
_{t}=\mathcal{F}_{0}$ for $t\in \lbrack -\delta ,0].$

Consider an equation of the form

\begin{equation}
\begin{array}{l}
dY(t)=-f(t,{Y}_{t},{Z}_{t},{K}_{t}(\cdot ),\mathbb{E}[Y(t)],\mathbb{E}[Z(t)],%
\mathbb{E}[K(t,\cdot )])dt \\
+Z(t)dB(t)+\int_{\mathbb{R}_{0}}K(t,\zeta )\tilde{N}(dt,d\zeta );t\in
\lbrack 0,T], \\
Y(T)=\xi ,%
\end{array}
\label{eq:BSFDE}
\end{equation}%
where
\begin{align*}
{Y}_{t}(s,\omega ):& =%
\begin{cases}
Y(t+s,\omega ), & s\in \lbrack -\delta ,0],t\geq 0,\omega \in \Omega , \\
Y(0) & t<0%
\end{cases}
\\
{Z}_{t}(s,\omega ):& =%
\begin{cases}
Z(t+s,\omega ), & s\in \lbrack -\delta ,0],t\geq 0,\omega \in \Omega , \\
0 & t<0%
\end{cases}
\\
{K}_{t}(s,\omega )(\zeta ):& =%
\begin{cases}
K(t+s,\omega ,\zeta ), & s\in \lbrack -\delta ,0],t\geq 0,\omega \in \Omega
,\zeta \in \mathbb{R}_{0}, \\
0 & t<0%
\end{cases}%
\end{align*}%
and $\xi \in L^{2}(\Omega ,\mathcal{F}_{T}).$

Here, for each $t,$ $(Y_{t},Z_{t},K_{t})$ are assumed to belong to the space
\begin{equation*}
\mathbb{S}^{2}\times \mathbb{L}^{2}\times \mathbb{H}^{2},
\end{equation*}%
of functionals defined below.

\begin{enumerate}
\item[i)] $\mathbb{S}^{2}=\mathbb{S}^{2}(\Omega ,\mathcal{D}[-\delta ,0])$
consisting of the functions $\alpha :\Omega \rightarrow \mathcal{D}([-\delta
,0],\mathbb{R})$ such that $\omega \mapsto \alpha (\omega ,s)$ is $\mathcal{F%
}_{t+s}$-measurable for each $s\in \lbrack -\delta ,0],$ and $s\mapsto
\alpha (\omega ,s)$ is c\`{a}dl\`{a}g for each $\omega \in \Omega .$ Let $%
\mathbb{S}^{2}$ be equipped with the norm
\begin{equation*}
\parallel \alpha \parallel _{\mathbb{S}^{2}}^{2}:=\mathbb{E}\left[
\sup_{s\in \lbrack -\delta ,0]}|\alpha (s)|^{2}\right] .
\end{equation*}
\end{enumerate}

We refer to \cite{BCDDR} for more on this space in connection with \emph{%
stochastic functional differential equations}.

\begin{enumerate}
\item[ii)] $\mathbb{L}^{2}$ consisting of the functions
\begin{equation*}
\lambda :\Omega \times \lbrack -\delta ,0]\rightarrow \mathbb{R}
\end{equation*}%
such that $(s,\omega )\mapsto \lambda (\omega ,s)$ is measurable with
respect to the predictable $\sigma $-algebra $\mathcal{P},$ generated by the
filtration $\mathcal{F}_{t+s},s\in \lbrack -\delta ,0].$ Now, we equip $%
\mathbb{L}^{2}$ with the norm
\begin{equation*}
\parallel \lambda \parallel _{\mathbb{L}^{2}}^{2}:=\mathbb{E}\left[
\int_{-\delta }^{0}\left\vert \lambda (t)\right\vert ^{2}dt\right] <\infty .
\end{equation*}
\end{enumerate}

Now, let $\mu $ be some bounded Borel measure on $[-\delta ,0]$ which is
either the Lebesgue measure or a Dirac point measure.

\begin{enumerate}
\item[iii)] $\mathbb{H}^{2}:=\mathbb{H}^{2}(\Omega ;L^{2}(\mu ))$ consisting
of the functions
\begin{equation*}
\theta :\Omega \times \lbrack -\delta ,0]\times \mathbb{R}_{0}\rightarrow
\mathbb{R}
\end{equation*}%
such that $(s,\omega ,\zeta )\mapsto \theta (\omega ,s,\zeta )$ is
measurable with respect to the product $\mathcal{P}\otimes B(\mathbb{R}_{0}),
$ generated by the filtration $\{\mathcal{F}_{t+s}\}_{s\in \lbrack -\delta
,0]},$ while $B$ is a Borel set of $\mathbb{R}_{0}.$ Now, we equip $\mathbb{H%
}^{2}$ with the norm
\begin{equation*}
\parallel \theta \parallel _{\mathbb{H}^{2}}^{2}:=\mathbb{E}\left[ \int_{%
\mathbb{R}_{0}}\int_{-\delta }^{0}\left\vert \theta (t,\zeta )\right\vert
^{2}\nu (d\zeta )dt\right] <\infty .
\end{equation*}
\end{enumerate}

Also, define the spaces

\begin{itemize}
\item $\mathbf{S}^{2}$ consisting of the c\`{a}dl\`{a}g adapted processes
\begin{equation*}
P:\Omega \times \lbrack -\delta ,T]\rightarrow \mathbb{R}
\end{equation*}%
such that $P(s)=P(0)$ for $s\in \lbrack -\delta ,0)$ and $\mathbb{E}\left[
\underset{t\in \left[ 0,T\right] }{\sup }|P(t)|^{2}\right] <\infty .$ We
equip $\mathbf{S}^{2}$ with equivalent norms
\begin{equation*}
\parallel P\parallel _{\mathbf{S}_{\beta }^{2}}^{2}:=\mathbb{E}\left[
\sup_{t\in \lbrack 0,T]}e^{\beta t}|P(t)|^{2}\right] ,\beta >0.
\end{equation*}

\item $\mathbf{L}^{2}$ consisting of the predictable processes
\begin{equation*}
Q:\Omega \times \lbrack -\delta ,T]\rightarrow \mathbb{R}
\end{equation*}%
such that $Q(s)=0$ for each $s\in \lbrack -\delta ,0),$ $%
\int_{0}^{T}Q(t)^{2}dt<\infty $. We equip $\mathbf{L}^{2}$ with the
equivalent norms
\begin{equation*}
\parallel Q\parallel _{\mathbf{L}_{\beta }^{2}}^{2}:=\int_{0}^{T}e^{\beta
t}Q(t)^{2}dt,\beta >0.
\end{equation*}

\item $\mathbf{H}^{2}$ consisting of the predictable processes
\begin{equation*}
R:\Omega \times \lbrack -\delta ,T]\times \mathbb{R}_{0}\rightarrow \mathbb{R%
}
\end{equation*}%
such that $R(s,\zeta )=0$ for each $s\in \lbrack -\delta ,0)$ and $%
\int_{0}^{T}\int_{\mathbb{R}_{0}}R(u,\zeta )^{2}\nu (d\zeta )dt<\infty $ We
equip $\mathbf{H}^{2}$ with the equivalent norms
\begin{equation*}
\parallel R\parallel _{\mathbf{H}_{\beta }^{2}}^{2}:=\int_{0}^{T}\int_{%
\mathbb{R}_{0}}e^{\beta t}R(t,\zeta )^{2}\nu (d\zeta )dt,\beta >0.
\end{equation*}
\end{itemize}

\begin{definition}
We say that $(Y,Z,K)\in \mathbf{S}^{2}\times \mathbf{L}^{2}\times \mathbf{H}%
^{2}$ is a \emph{solution} to $\left( \ref{eq:BSFDE}\right) ,$ if
\begin{equation}
\begin{array}{l}
Y(t)=\xi +\int_{t}^{T}f(s,Y_{s},Z_{s},K_{s},\mathbb{E}[Y(s)],\mathbb{E}%
[Z(s)],\mathbb{E}[K(s,\cdot )])ds \\
-\int_{t}^{T}Z(s)dB(s)-\int_{t}^{T}\int_{\mathbb{R}_{0}}K(s,\zeta )\tilde{N}%
(ds,d\zeta );t\in \lbrack 0,T].%
\end{array}
\label{simple}
\end{equation}
\end{definition}

{\normalsize \textbf{Assumption (H1)}}

{\normalsize Let $f:\Omega \times \lbrack 0,T]\times \mathbb{S}^{2}\times
\mathbb{L}^{2}\times \mathbb{H}^{2}\rightarrow
\mathbb{R}
,$ an }$\mathcal{F}_{t}$-adapted{\normalsize \ and $\xi $ satisfy the
following assumptions: }

\begin{enumerate}
\item[i)]  $\xi \in L^{2}(\Omega ,\mathcal{F}_{T})$.
\item[ii)] There exists a constant $C_{f}>0$ such that
\begin{equation*}
\begin{array}{l}
\left\vert f(t,y_{1},z_{1},k_{1}(\cdot ),y_{2},z_{2},k_{2}(\cdot ))-f(t,%
\tilde{y}_{1},\tilde{z}_{1},\tilde{k}_{1}(\cdot ),\tilde{y}_{2},\tilde{z}%
_{2},\tilde{k}_{2}(\cdot ))\right\vert ^{2} \\
\leq C_{f}\left( \left\vert y_{1}-\tilde{y}_{1}\right\vert ^{2}+\left\vert
z_{1}-\tilde{z}_{1}\right\vert ^{2}+\int_{
\mathbb{R}
_{0}}\left\vert k_{1}\left( \zeta \right) -\tilde{k}_{1}\left( \zeta \right)
\right\vert ^{2}\nu (d\zeta )\right. \\
\left. +\left\vert y_{2}-\tilde{y}_{2}\right\vert +\left\vert z_{2}-\tilde{z}%
_{2}\right\vert +\int_{
\mathbb{R}
_{0}}\left\vert k_{2}\left( \zeta \right) -\tilde{k}_{2}\left( \zeta \right)
\right\vert \nu (d\zeta )\right) \\
\text{ {\normalsize a.e.} }t,y_{1},\tilde{y}_{1},y_{2},\tilde{y}_{2},z_{1},%
\tilde{z}_{1},k_{1}(\cdot ),\tilde{k}_{1}(\cdot ),k_{2}(\cdot ),\tilde{k}%
_{2}(\cdot ).%
\end{array}%
\end{equation*}

\item[iii)]
\begin{equation*}
\mathbb{E}\left[ \int_{0}^{T}\left\vert f(t,0,0,0,0,0,0)\right\vert ^{2}dt%
\right] <\infty .
\end{equation*}
\end{enumerate}

\begin{lemma}
{\normalsize {\small Suppose that $(Y,f,Z,K,\xi )$}}${\normalsize {\small ,}}
${\normalsize {\small \ $(\tilde{Y},\tilde{f},\tilde{Z},\tilde{K},\tilde{\xi}%
)$ are arbitrary processes in }}$%
\begin{array}{c}
{\normalsize {\small \mathbf{S}^{2}\times \mathbf{%
\mathbb{R}
}\times \mathbf{L}^{2}\times \mathbf{H}^{2}\times L^{2}(\mathcal{F}_{T}),}}%
\end{array}%
${\normalsize {\small \ that satisfies the equation }}${\normalsize {\small (%
\ref{eq:BSFDE}).}}${\normalsize {\small \ Then under Assumption (H1),\ there
exists a constant $C_{\beta }>0,$ such that
\begin{align*}
& \parallel Y-\tilde{Y}\parallel _{\mathbf{S}_{\beta }^{2}}^{2}+\parallel Z-%
\tilde{Z}\parallel _{\mathbf{L}_{\beta }^{2}}^{2}+\parallel K-\tilde{K}%
\parallel _{\mathbf{H}_{\beta }^{2}}^{2} \\
& \leq C_{\beta }\left( \mathbb{E}\left[ \left\vert \xi -\tilde{\xi}%
\right\vert ^{2}\right] +\mathbb{E}\left[ \int_{0}^{T}e^{\beta t}\left\vert
f(t)-\tilde{f}(t)\right\vert ^{2}dt\right] \right) .
\end{align*}%
}}

{\normalsize {\small For convenience, we have used the simplified notation }}
\begin{eqnarray*}
f(t) &=&f(t,{Y}_{t},{Z}_{t},{K}_{t}(\cdot ),\mathbb{E}[Y(t)],\mathbb{E}%
[Z(t)],\mathbb{E}[K(t,\cdot )]) \\
\tilde{f}(t) &=&f(t,{\tilde{Y}}_{t},{\tilde{Z}}_{t},{\tilde{K}}_{t}(\cdot ),%
\mathbb{E}[\tilde{Y}(t)],\mathbb{E}[\tilde{Z}(t)],\mathbb{E}[\tilde{K}%
(t,\cdot )])
\end{eqnarray*}

The constant $C_{\beta }$ can be chosen, such that{\normalsize \ }%
\begin{equation*}
C_{\beta }:=\max \left( 9e^{\beta T},8T+\frac{1}{\beta }\right) ,\text{ \ \
\ \ }\beta >0.
\end{equation*}
\end{lemma}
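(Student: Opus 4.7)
The strategy is the classical linearization argument for BSDEs: introduce the differences
$\bar Y := Y - \tilde Y$, $\bar Z := Z - \tilde Z$, $\bar K := K - \tilde K$, $\bar \xi := \xi - \tilde \xi$, and $\Delta f(t) := f(t)-\tilde f(t)$. Subtracting the two instances of (\ref{simple}), $\bar Y$ solves
$$
\bar Y(t) = \bar\xi + \int_t^T \Delta f(s)\,ds - \int_t^T \bar Z(s)\,dB(s) - \int_t^T\!\!\int_{\mathbb{R}_0} \bar K(s,\zeta)\,\tilde N(ds,d\zeta).
$$
Note that the delayed arguments and mean-field terms in the generators are entirely absorbed into the single process $\Delta f$, so Assumption (H1) is only used to ensure that $f(t),\tilde f(t)$ belong to the relevant $L^2$-space; the Lipschitz structure plays no role here.

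Next, I would apply It\^o's formula (with jumps) to the weighted map $e^{\beta t}|\bar Y(t)|^2$ and integrate from $t$ to $T$. After localization and taking expectation to kill the local-martingale contributions coming from $dB$ and $\tilde N$, this yields the identity
$$
\mathbb{E}\bigl[e^{\beta t}|\bar Y(t)|^2\bigr] + \mathbb{E}\int_t^T e^{\beta s}\Bigl(\beta|\bar Y(s)|^2 + |\bar Z(s)|^2 + \int_{\mathbb{R}_0}|\bar K(s,\zeta)|^2\nu(d\zeta)\Bigr)ds = \mathbb{E}\bigl[e^{\beta T}|\bar\xi|^2\bigr] + 2\mathbb{E}\int_t^T e^{\beta s}\bar Y(s)\Delta f(s)\,ds.
$$
Applying Young's inequality $2ab\le \beta a^2 + \tfrac1\beta b^2$ to the cross term absorbs the $\beta|\bar Y|^2$ contribution on the right into the left, and setting $t=0$ produces the bound
$$
\|\bar Z\|_{\mathbf{L}^2_\beta}^2 + \|\bar K\|_{\mathbf{H}^2_\beta}^2 \le e^{\beta T}\mathbb{E}\bigl[|\bar\xi|^2\bigr] + \tfrac{1}{\beta}\,\mathbb{E}\int_0^T e^{\beta s}|\Delta f(s)|^2\,ds.
$$

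For the supremum part $\|\bar Y\|_{\mathbf{S}^2_\beta}^2$, I would go back to the closed-form representation for $\bar Y(t)$ above and apply the elementary inequality $|a_1+\dots+a_4|^2\le 4\sum|a_i|^2$, then $\sup_{t\in[0,T]}e^{\beta t}\le e^{\beta T}$, giving
$$
\mathbb{E}\!\sup_{t\in[0,T]} e^{\beta t}|\bar Y(t)|^2 \le 4e^{\beta T}\mathbb{E}[|\bar\xi|^2] + 4Te^{\beta T}\,\mathbb{E}\!\int_0^T |\Delta f(s)|^2 ds + 4e^{\beta T}\mathbb{E}\!\sup_t\Bigl|\int_t^T\!\bar Z\,dB\Bigr|^2 + 4e^{\beta T}\mathbb{E}\!\sup_t\Bigl|\int_t^T\!\!\int_{\mathbb{R}_0}\!\bar K\,\tilde N\Bigr|^2,
$$
where Cauchy--Schwarz handles the $ds$-integral. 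The two supremum martingale terms are then controlled by Burkholder--Davis--Gundy (or Doob's $L^2$-inequality, since the martingales are square integrable), each reducing to a constant times $\|\bar Z\|^2_{\mathbf{L}^2_\beta}$ or $\|\bar K\|^2_{\mathbf{H}^2_\beta}$. Plugging in the estimate from the previous paragraph yields a bound of the required form.

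The main obstacle is purely bookkeeping: carefully tracking the constants produced by the three uses of Young's inequality (one in the It\^o step, one in each BDG application) and by Cauchy--Schwarz on the drift integral, then summing the three contributions $\|\bar Y\|^2_{\mathbf{S}^2_\beta}+\|\bar Z\|^2_{\mathbf{L}^2_\beta}+\|\bar K\|^2_{\mathbf{H}^2_\beta}$ and selecting the Young parameters so that the terminal coefficient collapses to $9e^{\beta T}$ while the generator coefficient collapses to $8T+\tfrac1\beta$. The two components of $C_\beta=\max(9e^{\beta T},\,8T+\tfrac1\beta)$ correspond respectively to the terminal-condition contribution (where every path of the estimate carries an $e^{\beta T}$ factor) and the generator contribution (where the $T$ comes from Cauchy--Schwarz on $\int \Delta f\,ds$ and the $\tfrac1\beta$ comes from the Young step in the It\^o identity).
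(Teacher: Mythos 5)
Your skeleton coincides with the paper's: the paper also forms the differences, applies It\^o's formula to $e^{\beta t}|\Delta Y(t)|^{2}$, argues that the $dB$ and $\tilde N$ integrals have zero mean, and then (without writing out the computation) refers to Lemma 2.1 of Delong--Imkeller for the explicit constant. Your observation that the delay and mean-field structure of the generator is irrelevant here because everything is absorbed into $\Delta f$ is correct and consistent with how the lemma is stated. The It\^o--Young step with parameter $\beta$, giving
$\parallel \bar Z\parallel_{\mathbf{L}^2_\beta}^2+\parallel \bar K\parallel_{\mathbf{H}^2_\beta}^2\le e^{\beta T}\mathbb{E}[|\bar\xi|^2]+\tfrac1\beta\,\mathbb{E}\int_0^Te^{\beta s}|\Delta f(s)|^2ds$, is exactly right and accounts for the summands $e^{\beta T}$ and $1/\beta$ in $C_\beta$.

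The gap is in your treatment of $\parallel \bar Y\parallel_{\mathbf{S}^2_\beta}^2$. The four-way splitting followed by BDG (or Doob) on each stochastic integral proves that \emph{some} constant exists, but it cannot be made to ``collapse'' to $9e^{\beta T}$ and $8T+\tfrac1\beta$ as you assert. The reason is structural: in your route the martingale suprema are bounded by constants times $\parallel\bar Z\parallel_{\mathbf{L}^2_\beta}^2$ and $\parallel\bar K\parallel_{\mathbf{H}^2_\beta}^2$, and when you substitute the It\^o--Young estimate for these you pick up a factor $e^{\beta T}$ a second time, producing a terminal coefficient of order $e^{2\beta T}$ (with large numerical prefactors from the $4$'s and the Doob/BDG constants); similarly your drift term $4Te^{\beta T}\mathbb{E}\int_0^T|\Delta f|^2ds$ is genuinely larger than $8T\,\mathbb{E}\int_0^Te^{\beta s}|\Delta f|^2ds$, not merely a rearrangement of it. The constant in the statement is obtained differently (this is what Lemma 2.1 of Delong--Imkeller does): since the stochastic integrals vanish under conditional expectation, one writes
\begin{equation*}
\bar Y(t)=\mathbb{E}\Big[\,\bar\xi+\int_t^T\Delta f(s)\,ds\;\Big|\;\mathcal{F}_t\Big],\qquad e^{\beta t/2}|\bar Y(t)|\le \mathbb{E}\Big[\,e^{\beta T/2}|\bar\xi|+\int_0^Te^{\beta s/2}|\Delta f(s)|\,ds\;\Big|\;\mathcal{F}_t\Big],
\end{equation*}
and applies Doob's $L^2$ maximal inequality to the closed martingale on the right, followed by $(a+b)^2\le 2a^2+2b^2$ and Cauchy--Schwarz; this gives $\mathbb{E}\sup_te^{\beta t}|\bar Y(t)|^2\le 8e^{\beta T}\mathbb{E}[|\bar\xi|^2]+8T\,\mathbb{E}\int_0^Te^{\beta s}|\Delta f(s)|^2ds$, and adding the It\^o--Young bound yields exactly $C_\beta=\max(9e^{\beta T},8T+\tfrac1\beta)$. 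The precise value matters downstream, since $C_\beta$ enters the contraction condition (\ref{**}) of the existence theorem, so you should replace the BDG step by this conditional-expectation argument rather than leave the constant-tracking as an unverified claim.
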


\begin{proof}
{\normalsize We proceed as in \cite{DI2}. Introduce the notations $\Delta
Y=Y-\tilde{Y}$}${\normalsize ,}${\normalsize \ $\Delta Z=Z-\tilde{Z}$ and so
on. By applying It\^{o}'s formula to $e^{\beta t}|\Delta Y(t)|$}$%
{\normalsize ^{2},}${\normalsize \ we find that
\begin{align*}
de^{\beta t}|\Delta Y(t)|^{2}& =\Big[\beta e^{\beta t}|\Delta
Y(t)|^{2}+e^{\beta t}|\Delta Z(t)|^{2} \\
& +e^{\beta t}\int_{\mathbb{R}_{0}}|\Delta K(t,\zeta )|^{2}\nu (d\zeta
)-2e^{\beta t}\Delta Y(t)\Delta f(t)\Big]dt \\
& +2e^{\beta t}\Delta Y(t)\Delta Z(t)dB(t) \\
& +\int_{\mathbb{R}_{0}}e^{\beta t}\left( |\Delta K(t,\zeta )|^{2}+2e^{\beta
t}\Delta Y(t)\Delta K(t,\zeta )\right) \tilde{N}(dt,d\zeta ).
\end{align*}%
We claim now that, the $dB(t)$ and $\tilde{N}(dt,d\zeta )$-intergals above
have mean zero. To see why this holds, consider the processes {\small
\begin{align}
M_{1}(t)& :=\int_{0}^{t}2e^{\beta u}\Delta Y(u)\Delta Z(u)dB(u) \\
M_{2}(t)& :=\int_{0}^{t}e^{\beta u}\left( |\Delta K(u,\zeta )|^{2}+e^{\beta
u}\Delta Y(u)\Delta K(u,\zeta )\right) \tilde{N}(du,d\zeta )
\end{align}%
separately. Now since the integrands $|\Delta K(u,\zeta )|^{2}+e^{\beta
u}\Delta Y(u)\Delta K(u)$ is $P\otimes dt\otimes \nu $-integrable, $M_{2}(t)$
have mean zero (in fact $M_{2}$ is a martingale, see e.g. \cite{MR2460554},%
\cite{MR2603057}). }}

In order to find an explicit expression for $C_{\beta },$ we proceed as in
Lemma $2.1$ in {\normalsize \cite{DI2}}.
\end{proof}

\begin{theorem}
\label{existence} {\normalsize Suppose that Assumption (H1)\ holds, and that
there is some $\beta \geq 0$ such that
\begin{equation}
C_{\beta }C_{f}(1+e^{-\beta s})\max (1,T)<1.  \label{**}
\end{equation}%
}

{\normalsize Then there exists a unique solution $(Y,Z,K)\in \mathbf{S}%
^{2}\times \mathbf{L}^{2}\times \mathbf{H}^{2}$ to equation (\ref{eq:BSFDE}%
). }
\end{theorem}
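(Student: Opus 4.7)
The natural strategy is a Banach fixed-point argument on the product space $\mathbf{S}^{2}_{\beta}\times\mathbf{L}^{2}_{\beta}\times\mathbf{H}^{2}_{\beta}$ for a $\beta\geq 0$ satisfying $(\ref{**})$. Given an arbitrary triple $(U,V,L)$ in this space, I would freeze the delayed and mean-field arguments in the generator and form the driver
\[
\hat{f}(t):=f\bigl(t,U_{t},V_{t},L_{t}(\cdot),\mathbb{E}[U(t)],\mathbb{E}[V(t)],\mathbb{E}[L(t,\cdot)]\bigr),
\]
which is square integrable in $(t,\omega)$ by Assumption (H1)(ii)--(iii). Classical theory for BSDEs with jumps and an $(t,\omega)$-indexed square-integrable driver (see e.g.\ \cite{MR2460554,MR2603057}) then produces a unique $(Y,Z,K)\in\mathbf{S}^{2}\times\mathbf{L}^{2}\times\mathbf{H}^{2}$ solving $(\ref{simple})$ with $\hat{f}$ in place of $f$. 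Setting $\Phi(U,V,L):=(Y,Z,K)$ defines a self-map of the Banach space above.

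Next, I would establish contractivity. Taking two inputs $(U,V,L)$ and $(\tilde{U},\tilde{V},\tilde{L})$ with identical terminal value $\xi=\tilde{\xi}$, the preceding Lemma applied to $\Phi(U,V,L)$ and $\Phi(\tilde{U},\tilde{V},\tilde{L})$ gives
\[
\|\Phi(U,V,L)-\Phi(\tilde{U},\tilde{V},\tilde{L})\|_{\beta}^{2}\leq C_{\beta}\,\mathbb{E}\!\int_{0}^{T}e^{\beta t}\bigl|\hat{f}(t)-\tilde{\hat{f}}(t)\bigr|^{2}dt,
\]
where $\|\cdot\|_{\beta}^{2}$ denotes the sum of the three weighted norms. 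By (H1)(ii) the integrand is bounded by $C_{f}$ times a sum of squared segment differences plus mean-field differences. For each segment term I would invoke Fubini together with the substitution $u=t+s$ for $s\in[-\delta,0]$: the identity $e^{\beta t}=e^{-\beta s}e^{\beta u}$ and the range $u\in[-\delta,T]$ (using the conventions $Y(u)=Y(0)$, $Z(u)=K(u,\cdot)=0$ on $[-\delta,0)$) yield bounds of the form
\[
\mathbb{E}\!\int_{0}^{T}\! e^{\beta t}|U_{t}-\tilde{U}_{t}|_{\mathbb{S}^{2}}^{2}\,dt\;\leq\;e^{-\beta s}\max(1,T)\,\|U-\tilde{U}\|_{\mathbf{S}^{2}_{\beta}}^{2},
\]
and analogously for $V-\tilde{V}$ and $L-\tilde{L}$. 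The scalar mean-field differences are absorbed by Jensen's inequality, producing the remaining $1$ in the factor $1+e^{-\beta s}$. Combining these estimates leads to
\[
\|\Phi(U,V,L)-\Phi(\tilde{U},\tilde{V},\tilde{L})\|_{\beta}^{2}\leq C_{\beta}C_{f}(1+e^{-\beta s})\max(1,T)\,\|(U-\tilde{U},V-\tilde{V},L-\tilde{L})\|_{\beta}^{2},
\]
and by $(\ref{**})$ the constant is strictly less than $1$. Banach's fixed-point theorem then delivers the unique solution $(Y,Z,K)$ of $(\ref{eq:BSFDE})$, and uniqueness within $\mathbf{S}^{2}\times\mathbf{L}^{2}\times\mathbf{H}^{2}$ follows by applying the same estimate to any two solutions.

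The main obstacle will be Step 3: careful bookkeeping of the exponential weights through the delay substitution so that exactly the factor $(1+e^{-\beta s})\max(1,T)$ emerges, and reconciling the segment norms on $\mathbb{S}^{2},\mathbb{L}^{2},\mathbb{H}^{2}$ with the weighted process norms on $\mathbf{S}^{2}_{\beta},\mathbf{L}^{2}_{\beta},\mathbf{H}^{2}_{\beta}$. The asymmetry between the supremum-in-$s$ norm of $\mathbb{S}^{2}$ and the $L^{2}$-in-$s$ norms of $\mathbb{L}^{2},\mathbb{H}^{2}$ is what forces the $\max(1,T)$ prefactor and will require the $Y$-component to be estimated slightly differently from $Z$ and $K$; everything else is routine given the a priori estimate already established.
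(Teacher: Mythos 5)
Your proposal is correct and follows essentially the same route as the paper: define the map that freezes the delayed and mean-field arguments, solve the resulting classical BSDE with jumps, apply the preceding a priori Lemma, and control the delay and expectation terms via the substitution $u=t+s$ and Jensen's inequality to obtain the contraction constant $C_{\beta}C_{f}(1+e^{-\beta s})\max(1,T)$. The change-of-variable bounds you sketch are precisely the paper's inequalities $(\ref{n1})$--$(\ref{l1})$, including the asymmetry between the supremum norm on $\mathbf{S}^{2}_{\beta}$ (which produces the factor $T$) and the $L^{2}$ norms on $\mathbf{L}^{2}_{\beta},\mathbf{H}^{2}_{\beta}$.
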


\begin{proof}
{\normalsize It is a classical result of existence and uniqueness of
solutions for BSDEs with jumps, proved for $\beta =0$ in \cite{BBP}, Theorem
}${\normalsize 2.1}${\normalsize \ and Proposition }${\normalsize 2.2.}$%
{\normalsize \ We also refer to \cite{DI2}, \cite{TL} and \cite{R} for
related results, and to \cite{B2006} for generalizations using conditional
expectation with respect to stopping times and with non-homogeneous
compensators replacing $\nu \otimes dt$}${\normalsize .}${\normalsize \ Now
since the $\beta $-norms are equivalent for every $\beta $}${\normalsize ,}$%
{\normalsize \ the results holds for arbitrary $\beta $}${\normalsize .}$

{\normalsize We show the existence of a solution following \cite{DI2}.
Define the maps }$%
\begin{array}{c}
\Upsilon \left( Y,Z,K\right) :=\left( \bar{Y},\bar{Z},\bar{K}\right) ,%
\end{array}%
$ for\ $\mathbf{S}_{\beta }^{2}\times \mathbf{L}_{\beta }^{2}\times \mathbf{H%
}_{\beta }^{2}\rightarrow \mathbf{S}_{\beta }^{2}\times \mathbf{L}_{\beta
}^{2}\times \mathbf{H}_{\beta }^{2},$ {\normalsize by
\begin{align*}
d\bar{Y}(t)& =-f(t,Y_{t},Z_{t},K_{t}(\cdot ),\mathbb{E}[Y(t)],\mathbb{E}%
[Z(t)],\mathbb{E}[K(t,\cdot )])dt & & \\
& +\bar{Z}(t)dB(t)+\int_{\mathbb{R}_{0}}\bar{K}(t,\zeta )\tilde{N}(dt,d\zeta
), & 0& \leq t\leq T, \\
\bar{Y}(T)& =\xi  & & \\
\bar{Y}(t)& =\bar{Y}(0)\text{ and }\bar{Z}(t)=\bar{K}(t,\cdot )=0 & t& \in
\lbrack -\delta ,0].
\end{align*}%
By assumption, the integrands }$f(t,Y_{t},Z_{t},K_{t}(\cdot ),\mathbb{E}%
[Y(t)],\mathbb{E}[Z(t)],\mathbb{E}[K(t,\cdot )])${\normalsize \ has a
version in $\mathbf{L}^{2}(P\otimes \mu )$ whenever $Y,Z,K$ satisfy the
Lipschitz condition }${\normalsize (ii)}$ of Assumption (H1){\normalsize ,\
and hence by Lemma }${\normalsize 2,}${\normalsize \ the processes $(\bar{Z}$%
}${\normalsize ,}${\normalsize $\bar{Y}$}${\normalsize ,}${\normalsize $\bar{%
K})$ are well defined. Now, it suffices to show that }$\left( \bar{Y},\bar{Z}%
,\bar{K}\right) =:${\normalsize $\Upsilon $}$\left( Y,Z,K\right) $%
{\normalsize \ is a contraction for every $\beta \geq 0$ such that }$%
{\normalsize (\ref{**})}$ is satisfied{\normalsize . For that, we consider }$%
\begin{array}{c}
\left( Y^{1},Z^{1},K^{1}\right) ,\left( Y^{2},Z^{2},K^{2}\right) \in \mathbf{%
S}_{\beta }^{2}\times \mathbf{L}_{\beta }^{2}\times \mathbf{H}_{\beta }^{2}%
\end{array}%
$ with
\begin{equation*}
\begin{array}{c}
\left( \bar{Y}^{1},\bar{Z}^{1},\bar{K}^{1}\right) =:{\normalsize \Upsilon }%
\left( Y^{1},Z^{1},K^{1}\right) , \\
\left( \bar{Y}^{2},\bar{Z}^{2},\bar{K}^{2}\right) =:{\normalsize \Upsilon }%
\left( Y^{2},Z^{2},K^{2}\right) .%
\end{array}%
\end{equation*}

 We proceed by showing some useful inequalities. Suppose that $\mathbf{t}\in \left[ 0,T\right] ,$ $P\in \mathbf{S}^{2},Q\in \mathbf{L}^{2}$
and $R\in \mathbf{H}^{2}.$
We have for $s\in \left[ -\delta ,0\right] :$
\begin{equation}
\begin{array}{ll}
\mathbb{E}\left[ \int_{0}^{T}e^{\beta u}|P(u+s)|^{2}du\right] & =e^{-\beta s}%
\mathbb{E}\left[ \int_{0}^{T}e^{\beta (u+s)}|P(u+s)|^{2}du\right] \\
& \leq e^{-\beta s}\mathbb{E}\left[ \int_{0}^{T}\underset{0\leq u\leq T}{%
\sup }e^{\beta (u+s)}|P(u+s)|^{2}du\right] \\
& =Te^{-\beta s}\parallel P\parallel _{\mathbf{S}_{\beta }^{2}}^{2},%
\end{array}
\label{n1}
\end{equation}%
and%
\begin{equation}
\begin{array}{ll}
\mathbb{E}\left[ \int_{0}^{T}e^{\beta u}\left( \mathbb{E[}\left\vert
P(u)\right\vert ]\right) ^{2}du\right] & \leq \mathbb{E}\left[ \int_{0}^{T}%
\underset{0\leq u\leq T}{\sup }e^{\beta u}\left\vert P(u)\right\vert ^{2}du%
\right] \\
& =T\parallel P\parallel _{\mathbf{S}_{\beta }^{2}}^{2}.%
\end{array}
\label{n3}
\end{equation}

Choosing $v:=u+s,$ we get%
\begin{equation}
\begin{array}{ll}
\mathbb{E}\left[ \int_{0}^{T}e^{\beta u}|{\normalsize Q}(u+s)|^{2}du\right]
& =\mathbb{E}\left[ \int_{s}^{T+s}e^{\beta (v-s)}|{\normalsize Q}(v)|^{2}dv%
\right] \\
& =e^{-\beta s}\mathbb{E}\left[ \int_{s}^{T+s}e^{\beta v}|{\normalsize Q}%
(v)|^{2}dv\right] \\
& \leq e^{-\beta s}\mathbb{E}\left[ \int_{0}^{T}e^{\beta v}|{\normalsize Q}%
(v)|^{2}dv\right] \\
& =e^{-\beta s}\parallel {\normalsize Q}\parallel _{\mathbf{L}_{\beta
}^{2}}^{2},%
\end{array}
\label{n2}
\end{equation}%
and%
\begin{eqnarray}
\mathbb{E}\left[ \int_{0}^{T}e^{\beta u}\left( \mathbb{E[}\left\vert
{\normalsize Q}(u)\right\vert ]\right) ^{2}du\right] &\leq &\mathbb{E}\left[
\int_{0}^{T}e^{\beta u}\left\vert {\normalsize Q}(u)\right\vert ^{2}du%
\right]  \label{l1} \\
&=&\parallel {\normalsize Q}\parallel _{\mathbf{L}_{\beta }^{2}}^{2}.  \notag
\end{eqnarray}

Similar estimates $\left( \ref{n2}\right) -\left( \ref{l1}\right) ,$ for
{\normalsize $R\in \mathbf{H}^{2}.$\ }

{\normalsize Now, by Lemma }${\normalsize 2,}${\normalsize \ the Lipschitz
Assumption and by the inequalities }$\left( \ref{n1}\right) -\left( \ref{l1}%
\right) {\normalsize ,}${\normalsize \ we find that
\begin{align*}
& \parallel \bar{Y}^{1}-\bar{Y}^{2}\parallel _{\mathbf{S}_{\beta
}^{2}}^{2}+\parallel \bar{Z}^{1}-\bar{Z}^{2}\parallel _{\mathbf{L}_{\beta
}^{2}}^{2}+\parallel \bar{K}^{1}-\bar{K}^{2}\parallel _{\mathbf{H}_{\beta
}^{2}}^{2} \\
& \leq C_{\beta }C_{f}\left( 1+e^{-\beta s}\right) \max (1,T)\Big(\parallel
Y^{1}-Y^{2}\parallel _{\mathbf{S}_{\beta }^{2}}^{2}+\parallel
Z^{1}-Z^{2}\parallel _{\mathbf{L}_{\beta }^{2}}^{2}+\parallel
K^{1}-K^{2}\parallel _{\mathbf{H}_{\beta }^{2}}^{2}\Big).
\end{align*}%
}

Whenever $\left( \ref{**}\right) $ is satisfied, the proof is completed.
\end{proof}

\begin{remark}
{\normalsize If the Lipschitz condition }${\normalsize (ii)}${\normalsize \
is replaced by stronger Lipschitz condition }${\normalsize (A3)}$%
{\normalsize \ in \cite{DI2}, we may choose }%
\begin{equation*}
C_{\beta }C_{f}\left( 2+\int_{-\delta }^{0}e^{-\beta s}\mu (ds)\right) \max
(1,T)<1,
\end{equation*}%
instead of inequality $\left( \ref{**}\right) $ in Theorem \ref{existence}
above{\normalsize .}
\end{remark}

\subsection{Special case}

We consider now a mean-field equation on the form%
\begin{equation}
\begin{array}{c}
Y(t)=\xi +\int_{t}^{T}f(u,\mathbf{Y}(u),\mathbf{Z}(u),\mathbf{K}(u,\cdot ),%
\mathbb{E}[Y(u)],\mathbb{E}[Z(u)],\mathbb{E}[K(u,\cdot )])du \\
+\int_{t}^{T}Z(u)dB(u)+\int_{t}^{T}\int_{%
\mathbb{R}
_{0}}K(u,\zeta )\tilde{N}(du,d\zeta );t\in \left[ 0,T\right] ,%
\end{array}
\label{lem}
\end{equation}%
where for a given positive constant $\delta .$

The boldface processes from equation $\left( \ref{lem}\right) $ are defined
by \ \
\begin{equation*}
\begin{array}{l}
\mathbf{Y}(t):=(Y(t),Y(t-\delta )), \\
\mathbf{Z}(t):=(Z(t),Z(t-\delta )), \\
\mathbf{K}(t,\cdot ):=(K(t,\cdot ),K(t-\delta ,\cdot )),%
\end{array}%
\end{equation*}%
and a generator $f:\Omega \times \left[ 0,T\right] \times
\mathbb{R}
\times \mathbb{S}^{2}\times
\mathbb{R}
\times \mathbb{L}^{2}\times L^{2}(\nu )\times \mathbb{H}^{2}\times
\mathbb{R}
^{3}\rightarrow \times
\mathbb{R}
.$

For $f$ and $\xi ,$ we impose the set of assumptions.

{\normalsize \textbf{Assumption (H2)}}

\begin{enumerate}
\item[i)] {\normalsize $\xi \in L^{2}(\Omega ,\mathcal{F}_{T})$}$%
{\normalsize .}${\normalsize \ }

\item[ii)] The generator function $f$ is $\mathcal{F}_{t}$-adapted and
Lipschitz in the sense that {\normalsize there exists a constant $\hat{C}%
_{f}>0$ such that}%
\begin{equation*}
\begin{array}{l}
\left\vert f(t,y_{1},y_{2},z_{1},z_{2},k_{1}(\cdot ),k_{2}(\cdot
),y_{3},z_{3},k_{3}(\cdot ))-f(t,\tilde{y}_{1},\tilde{y}_{2},\tilde{z}_{1},%
\tilde{z}_{2},\tilde{k}_{1}(\cdot ),\tilde{k}_{2}(\cdot ),\tilde{y}_{3},%
\tilde{z}_{3},\tilde{k}_{3}(\cdot ))\right\vert ^{2} \\
\leq \hat{C}_{f}\left( \left\vert y_{1}-\tilde{y}_{1}\right\vert
^{2}+\left\vert y_{2}-\tilde{y}_{2}\right\vert ^{2}+\left\vert z_{1}-\tilde{z%
}_{1}\right\vert ^{2}+\left\vert z_{2}-\tilde{z}_{2}\right\vert ^{2}\right.
\\
+\int_{%
\mathbb{R}
_{0}}\left\vert k_{1}\left( \zeta \right) -\tilde{k}_{1}\left( \zeta \right)
\right\vert ^{2}\nu (d\zeta )+\int_{%
\mathbb{R}
_{0}}\left\vert k_{2}\left( \zeta \right) -\tilde{k}_{2}\left( \zeta \right)
\right\vert ^{2}\nu (d\zeta ) \\
\left. +\left\vert y_{3}-\tilde{y}_{3}\right\vert +\left\vert z_{3}-\tilde{z}%
_{3}\right\vert +\int_{%
\mathbb{R}
_{0}}\left\vert k_{3}\left( \zeta \right) -\tilde{k}_{3}\left( \zeta \right)
\right\vert \nu (d\zeta )\right) \\
\text{ {\normalsize a.e.} }t,y_{1},\tilde{y}_{1},y_{2},\tilde{y}_{2},y_{3},%
\tilde{y}_{3},z_{1},\tilde{z}_{1},z_{2},\tilde{z}_{2},z_{3},\tilde{z}%
_{3},k_{1}(\cdot ),\tilde{k}_{1}(\cdot ),k_{2}(\cdot ),\tilde{k}_{2}(\cdot
),k_{3}(\cdot ),\tilde{k}_{3}(\cdot ).%
\end{array}%
\end{equation*}

\item[iii)]
\begin{equation*}
\mathbb{E}\left[ \int_{0}^{T}\left\vert f(t,0,0,0,0,0,0,0,0,0)\right\vert
^{2}dt\right] <\infty .
\end{equation*}
\end{enumerate}

\begin{theorem}
We say that $\left( Y,Z,K\right) \in \mathbf{S}^{2}\times \mathbf{L}%
^{2}\times \mathbf{H}^{2}$ is a unique solution of $\left( \ref{lem}\right) $
if $\xi $ and $f$ satisfy the Assumption (H2), and if there is $\beta \geq
0, $ such that
\begin{equation*}
C_{\beta }\hat{C}_{f}(2+e^{\beta \delta })\max (1,T)<1.
\end{equation*}
\end{theorem}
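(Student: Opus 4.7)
The plan is to mirror the contraction-mapping proof of Theorem \ref{existence}, replacing the segment integrals by point evaluations at $t$ and $t-\delta$, and tracking carefully how the delay $\delta$ interacts with the exponential weight $e^{\beta t}$. First I would define the map $\Upsilon \colon \mathbf{S}_\beta^2 \times \mathbf{L}_\beta^2 \times \mathbf{H}_\beta^2 \to \mathbf{S}_\beta^2 \times \mathbf{L}_\beta^2 \times \mathbf{H}_\beta^2$ by letting $(\bar Y,\bar Z,\bar K) := \Upsilon(Y,Z,K)$ be the solution of the (non-delayed) BSDE with terminal value $\xi$ and driver
\[
u \mapsto f(u,\mathbf{Y}(u),\mathbf{Z}(u),\mathbf{K}(u,\cdot),\mathbb{E}[Y(u)],\mathbb{E}[Z(u)],\mathbb{E}[K(u,\cdot)]),
\]
extended by $\bar Y(t)=\bar Y(0)$ and $\bar Z(t)=\bar K(t,\cdot)=0$ on $[-\delta,0]$. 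Assumption (H2)(ii)--(iii) guarantees that this driver lies in $L^2(P\otimes dt)$ whenever $(Y,Z,K) \in \mathbf{S}_\beta^2 \times \mathbf{L}_\beta^2 \times \mathbf{H}_\beta^2$, so classical BSDE theory together with Lemma 2 ensures that $\Upsilon$ is well-defined.

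Next I would apply Lemma 2 to the pair $\Upsilon(Y^1,Z^1,K^1)$ and $\Upsilon(Y^2,Z^2,K^2)$, which gives
\[
\|\bar Y^1-\bar Y^2\|_{\mathbf{S}_\beta^2}^2 + \|\bar Z^1-\bar Z^2\|_{\mathbf{L}_\beta^2}^2 + \|\bar K^1-\bar K^2\|_{\mathbf{H}_\beta^2}^2 \leq C_\beta\, \mathbb{E}\!\left[\int_0^T e^{\beta u}|\Delta f(u)|^2\,du\right].
\]
The Lipschitz estimate (H2)(ii) then bounds $|\Delta f(u)|^2$ by $\hat C_f$ times a sum of six types of terms: instantaneous terms $|\Delta Y(u)|^2,|\Delta Z(u)|^2, \int|\Delta K(u,\zeta)|^2\nu(d\zeta)$, delayed terms $|\Delta Y(u-\delta)|^2,|\Delta Z(u-\delta)|^2,\int|\Delta K(u-\delta,\zeta)|^2\nu(d\zeta)$, and mean-field terms $(\mathbb{E}|\Delta Y(u)|)^2$ etc.

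The key technical step, and the source of the factor $e^{\beta\delta}$, is to control the delayed terms. For $P\in\mathbf{S}^2$, the change of variables $v=u-\delta$ together with $P(v)=P(0)$ on $[-\delta,0)$ yields
\[
\mathbb{E}\!\left[\int_0^T e^{\beta u}|P(u-\delta)|^2du\right] = e^{\beta\delta}\mathbb{E}\!\left[\int_{-\delta}^{T-\delta}e^{\beta v}|P(v)|^2dv\right] \leq e^{\beta\delta}\, T\, \|P\|_{\mathbf{S}_\beta^2}^2,
\]
and analogously one shows $\mathbb{E}[\int_0^T e^{\beta u}|Q(u-\delta)|^2 du] \leq e^{\beta\delta}\|Q\|_{\mathbf{L}_\beta^2}^2$ for $Q\in\mathbf{L}^2$, and similarly for $R\in\mathbf{H}^2$. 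The instantaneous contributions each give a factor $1$ (using $\sup$ or the identity on $\mathbf{L}_\beta^2,\mathbf{H}_\beta^2$) times $\max(1,T)$, and the mean-field contributions are handled exactly as in (\ref{n3}) and (\ref{l1}), contributing another factor $1$. Summing the instantaneous, delayed, and mean-field parts gives the coefficient $(1+e^{\beta\delta}+1)=2+e^{\beta\delta}$.

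Combining these estimates yields
\[
\|\bar Y^1-\bar Y^2\|_{\mathbf{S}_\beta^2}^2 + \|\bar Z^1-\bar Z^2\|_{\mathbf{L}_\beta^2}^2 + \|\bar K^1-\bar K^2\|_{\mathbf{H}_\beta^2}^2 \leq C_\beta \hat C_f(2+e^{\beta\delta})\max(1,T)\, \bigl(\|\Delta Y\|_{\mathbf{S}_\beta^2}^2+\|\Delta Z\|_{\mathbf{L}_\beta^2}^2+\|\Delta K\|_{\mathbf{H}_\beta^2}^2\bigr),
\]
so $\Upsilon$ is a strict contraction under the stated condition, and the Banach fixed-point theorem supplies the unique solution. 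I expect the only delicate step to be the bookkeeping of the six Lipschitz terms and verifying that the change-of-variable bound above is genuinely sharp enough to give precisely $2+e^{\beta\delta}$ rather than something larger; once that is set up, everything else is a direct copy of the proof of Theorem \ref{existence}.
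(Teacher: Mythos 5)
Your proposal is correct and follows essentially the same route as the paper: the paper's proof simply says to repeat the contraction argument of Theorem \ref{existence}, replacing the segment estimates (\ref{n1}) and (\ref{n2}) by the two point-delay estimates $\mathbb{E}\bigl[\int_0^T e^{\beta u}|P(u-\delta)|^2\,du\bigr]\le Te^{\beta\delta}\|P\|_{\mathbf{S}_\beta^2}^2$ and $\mathbb{E}\bigl[\int_0^T e^{\beta u}|Q(u-\delta)|^2\,du\bigr]\le e^{\beta\delta}\|Q\|_{\mathbf{L}_\beta^2}^2$, which are exactly the bounds you derive. Your bookkeeping of the instantaneous, delayed, and mean-field contributions yielding the factor $(2+e^{\beta\delta})\max(1,T)$ matches the paper's conclusion.
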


\begin{proof}
The proof is closely related to the proof of Theorem $4.$ We just need some
estimations instead of estimations $\left( \ref{n1}\right) $ and $\left( \ref%
{n2}\right) .$

 Suppose that $\mathbf{t}\in \left[ 0,T\right] ,$ $P\in \mathbf{S}^{2},Q\in \mathbf{L}^{2}$
and $R\in \mathbf{H}^{2}.$ We have that%
\begin{equation*}
\begin{array}{ll}
\mathbb{E}\left[ \int_{0}^{T}e^{\beta u}|P(u-\delta )|^{2}du\right] &
=e^{\beta \delta }\mathbb{E}\left[ \int_{0}^{T}e^{\beta (u-\delta
)}|P(u-\delta )|^{2}du\right] \\
& \leq e^{\beta \delta }\mathbb{E}\left[ \int_{0}^{T}\underset{0\leq u\leq T%
}{\sup }e^{\beta (u-\delta )}|P(u-\delta )|^{2}du\right] \\
& =Te^{\beta \delta }\parallel P\parallel _{\mathbf{S}_{\beta }^{2}}^{2},%
\end{array}%
\end{equation*}%
and that%
\begin{equation*}
\begin{array}{ll}
\mathbb{E}\left[ \int_{0}^{T}e^{\beta u}|{\normalsize Q}(u-\delta )|^{2}du%
\right] & =\mathbb{E}\left[ \int_{-\delta }^{T-\delta }e^{\beta (v+\delta )}|%
{\normalsize Q}(v)|^{2}dv\right] \\
& \leq e^{\beta \delta }\parallel {\normalsize Q}\parallel _{\mathbf{L}%
_{\beta }^{2}}^{2},%
\end{array}%
\end{equation*}%
similarly for {\normalsize $R\in \mathbf{H}^{2}.$}
\end{proof}

\begin{remark}
Related to the mean field BSDEs with delay $\left( 2\right) ,$ $\left(
5\right) $ and $\left( \ref{boldy}\right) $ are the BSDEs $\left( 3\right) .$
This fully coupled BSDEs allows for the delay generator by the FSDEs. We can
solve the forward equation separately to obtain the process $X$ and then
plugging the solution $X$ into the backward equation to find the solution $%
\left( Y,Z,K\right) .$
\end{remark}

\section{Infinite horizon case}

We deal now, with the existence and uniqueness of a solution $\left(
Y,Z,K\right) $ of the following MFDBSDE, in infinite time horizon:

\begin{equation}
\left\{
\begin{array}{l}
dY(t)=-f(t,Y_{t},Z_{t},K_{t}(\cdot ),\mathbb{E}[Y(t)],\mathbb{E}[Z(t)],%
\mathbb{E}[K(t,\cdot )])dt+Z(t)dB(t) \\
+\int_{%
\mathbb{R}
_{0}}K(t,\zeta )\tilde{N}\left( dt,d\zeta \right) ;t\geq 0, \\
\underset{t\rightarrow \infty }{\lim }Y(t)=0.%
\end{array}%
\right.  \label{1.1}
\end{equation}

Here the generator $f:\Omega \times \lbrack 0,\infty )\times \mathbb{S}%
^{2}\times \mathbb{L}^{2}\times \mathbb{H}^{2}\times
\mathbb{R}
^{3}\rightarrow
\mathbb{R}
,$ depends on previous values of the solution, such as

\begin{equation*}
\begin{array}{l}
Y_{t}:=Y(t+r), \\
Z_{t}:=Z(t+r), \\
K_{t}(\cdot ):=K(t+r,\cdot ),%
\end{array}%
\end{equation*}%
for some constant $r\in \left[ -\delta ,0\right] .$

\begin{definition}
We denote by $\mathcal{L},$ the space of all $\mathcal{F}_{t}$-adapted
processes satisfying:%
\begin{equation}
\left\Vert \left( Y,Z,K\right) \right\Vert ^{2}:=\mathbb{E}\left[ \underset{%
t\geq 0}{\sup }e^{\beta t}\left\vert Y(t)\right\vert ^{2}+\int_{0}^{\infty
}e^{\beta t}\left\{ \left\vert Z(t)\right\vert ^{2}+\int_{%
\mathbb{R}
_{0}}\left\vert K(t,\zeta )\right\vert ^{2}\nu \left( d\zeta \right)
\right\} dt\right] <\infty .  \label{1.2}
\end{equation}
\end{definition}

We set now assumptions proving existence and uniqueness of the solution of
equation $\left( \ref{1.1}\right) .$

\textbf{Assumption (H3)}

\textit{On the generator }$f:$

\begin{itemize}
\item The function $f$ is $\mathcal{F}_{t}$-adapted.

\item \textit{Integrability condition:}%
\begin{equation*}
\mathbb{E}\left[ \int_{0}^{\infty }e^{\beta t}\left\vert
f(t,0,0,0,0,0,0)\right\vert ^{2}dt\right] <\infty .
\end{equation*}

\item \textit{Lipschitz condition: }There exists a constant $C>0,$ such that%
\begin{equation*}
\begin{array}{l}
\left\vert f(t,y_{1},z_{1},k_{1}(\cdot ),y_{2},z_{2},k_{2}(\cdot ))-f(t,%
\tilde{y}_{1},\tilde{z}_{1},\tilde{k}_{1}(\cdot ),\tilde{y}_{2},\tilde{z}%
_{2},\tilde{k}_{2}(\cdot ))\right\vert \\
\leq C\left( \left\vert y_{1}-\tilde{y}_{1}\right\vert +\left\vert y_{2}-%
\tilde{y}_{2}\right\vert +\left\vert z_{1}-\tilde{z}_{1}\right\vert +\int_{%
\mathbb{R}
_{0}}\left\vert k_{1}\left( \zeta \right) -\tilde{k}_{1}\left( \zeta \right)
\right\vert \nu (d\zeta )\right. \\
+\text{ }\left. \int_{
\mathbb{R}
_{0}}\left\vert k_{2}\left( \zeta \right) -\tilde{k}_{2}\left( \zeta \right)
\right\vert \nu (d\zeta )\right) \text{ } \\
\text{a.e. }t,y_{1},\tilde{y}_{1},y_{2},\tilde{y}_{2},z_{1},\tilde{z}%
_{1},k_{1}(\cdot ),\tilde{k}_{1}(\cdot ),k_{2}(\cdot ),\tilde{k}_{2}(\cdot ).%
\end{array}%
\end{equation*}
\end{itemize}

There are real numbers $\beta ,C$ for sufficiently small $\epsilon ,$ we
have that
\begin{equation*}
\beta >\frac{6C^{2}}{\epsilon }+\frac{1}{2}.
\end{equation*}

We give now, the main Theorem of this section which is the extension to the
delay case and to mean-field of Theorem $3.1$ in \cite{HOP}.

\begin{theorem}
Under the above Assumption (H3), there exists a unique solution $(Y,Z,K)$ of
a MFDBSDE $\left( \ref{1.1}\right) ,$ such that%
\begin{equation}
\begin{array}{l}
\mathbb{E}\left[ \underset{t\geq 0}{\sup }e^{\beta t}\left\vert
Y(t)\right\vert ^{2}+\int_{0}^{\infty }e^{\beta t}\left( \left\vert
Y(t)\right\vert ^{2}+\left\vert Z(t)\right\vert ^{2}+\int_{
\mathbb{R}
_{0}}\left\vert K(t,\zeta )\right\vert ^{2}\nu \left( d\zeta \right) \right)
dt\right] \\
\leq c\mathbb{E}\left[ \int_{0}^{\infty }e^{\beta t}\left\vert
f(t,0,0,0,0,0,0)\right\vert ^{2}dt\right] <\infty .%
\end{array}
\label{1.3}
\end{equation}
\end{theorem}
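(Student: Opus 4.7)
The plan is to prove existence and uniqueness by a Banach fixed point argument on the space $(\mathcal{L}, \|\cdot\|)$ defined by (\ref{1.2}), adapting the scheme of Theorem $3.1$ in \cite{HOP} to incorporate the delay and mean-field dependencies. I would define a Picard map $\Phi : \mathcal{L} \to \mathcal{L}$ by freezing the generator: given $(Y,Z,K) \in \mathcal{L}$, set
\begin{equation*}
g(s) := f\bigl(s, Y_s, Z_s, K_s(\cdot), \mathbb{E}[Y(s)], \mathbb{E}[Z(s)], \mathbb{E}[K(s,\cdot)]\bigr),
\end{equation*}
and let $(\bar Y, \bar Z, \bar K) := \Phi(Y,Z,K)$ be the unique solution in $\mathcal{L}$ of the linear infinite-horizon BSDE with driver $g$ and decay condition $\lim_{t\to\infty}\bar Y(t)=0$, whose solvability is the non-delay content of Theorem $3.1$ in \cite{HOP} once one verifies $\mathbb{E}[\int_0^\infty e^{\beta s}|g(s)|^2\,ds] < \infty$. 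This integrability follows from Assumption (H3) together with shift estimates of the form
\begin{equation*}
\mathbb{E}\!\left[\int_0^\infty e^{\beta s}|Y(s+r)|^2\,ds\right] \leq e^{-\beta r}\,\beta^{-1}\,\|(Y,Z,K)\|^{2}, \qquad r \in [-\delta,0],
\end{equation*}
which mirror (\ref{n1})--(\ref{l1}) from the finite-horizon section, together with the analogous mean-field estimates.

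Next I would show $\Phi$ is a contraction. Taking two inputs $(Y^i,Z^i,K^i)$ with outputs $(\bar Y^i,\bar Z^i,\bar K^i)$, writing $\Delta(\cdot)$ for differences, and applying Itô's formula to $e^{\beta t}|\Delta\bar Y(t)|^2$ on $[t,T]$ before sending $T \to \infty$ (using $e^{\beta T}\mathbb{E}[|\Delta\bar Y(T)|^2] \to 0$ from membership in $\mathcal{L}$) yields
\begin{equation*}
\mathbb{E}\!\left[e^{\beta t}|\Delta\bar Y(t)|^2 + \int_t^\infty e^{\beta s}\bigl(\beta|\Delta\bar Y|^2 + |\Delta\bar Z|^2 + \textstyle\int_{\mathbb{R}_0}|\Delta\bar K|^2\nu(d\zeta)\bigr)ds\right] = 2\,\mathbb{E}\!\left[\int_t^\infty e^{\beta s}\Delta\bar Y\cdot\Delta g\,ds\right].
\end{equation*}
Bounding the right-hand side with Young's inequality $2|ab| \leq \epsilon^{-1}a^2 + \epsilon b^2$, invoking the Lipschitz condition of Assumption (H3), and applying the shift estimates above, one dominates everything by a constant multiple of $\|(\Delta Y, \Delta Z, \Delta K)\|^2$ plus a $\|(\Delta\bar Y,\Delta\bar Z,\Delta\bar K)\|^2$ remainder; the hypothesis $\beta > 6C^2/\epsilon + 1/2$ is calibrated precisely to absorb this remainder and leave a contraction constant strictly less than one, with the $1/2$ in the threshold accounting for the Burkholder--Davis--Gundy constant that appears when passing from $\mathbb{E}[e^{\beta t}|\Delta\bar Y|^2]$ to $\mathbb{E}[\sup_t e^{\beta t}|\Delta\bar Y|^2]$.

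The main difficulty is controlling the delayed arguments against the $\mathcal{L}$-norm, whose $Y$-component is a $\sup$-weighted norm while the $Z$- and $K$-components are integrated: each of the six Lipschitz terms (three delayed and three mean-field) must be dominated by $\|(\Delta Y,\Delta Z,\Delta K)\|^2$, and the factor $e^{\beta\delta}$ produced by the change of variables has to be absorbed by the smallness of $\epsilon$ together with the largeness of $\beta$. Once the contraction is established, Banach's fixed point theorem furnishes the unique $(Y,Z,K) \in \mathcal{L}$. The a priori bound (\ref{1.3}) is then obtained by applying the very same Itô identity directly to the solution $(Y,Z,K)$ and $(0,0,0)$: the cross-term $2\mathbb{E}[\int_0^\infty e^{\beta s} Y\cdot f(s,0,0,0,0,0,0)\,ds]$ plus the Lipschitz remainder can, under the same choice of $\beta$, be absorbed into the left-hand side except for a constant multiple of $\mathbb{E}[\int_0^\infty e^{\beta s}|f(s,0,0,0,0,0,0)|^2 ds]$, which is finite by Assumption (H3).
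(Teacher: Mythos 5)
Your overall strategy differs from the paper's in the existence step: the paper does not set up a Picard map on $\mathcal{L}$ at all, but instead truncates the horizon, solves the equation on $[0,n]$ with zero data at $t=n$ (extending by zero for $t>n$), derives a uniform a priori estimate, and shows $(Y^n,Z^n,K^n)$ is Cauchy in $\mathcal{L}$ by splitting the difference $Y^m-Y^n$ into the regions $n\le t\le m$ and $t\le n$. Your direct contraction argument, if completed, would be a legitimate and arguably cleaner alternative, and your uniqueness and a priori computations coincide with the paper's It\^{o}-formula estimates.

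There is, however, a concrete gap in your contraction step. The shift estimate you state, $\mathbb{E}\left[\int_0^\infty e^{\beta s}|Y(s+r)|^2\,ds\right]\le e^{-\beta r}\beta^{-1}\|(Y,Z,K)\|^2$, is false for the norm $(\ref{1.2})$: the $Y$-component of that norm is $\mathbb{E}\left[\sup_{t\ge0}e^{\beta t}|Y(t)|^2\right]$, and the weighted integral $\mathbb{E}\left[\int_0^\infty e^{\beta s}|Y(s)|^2\,ds\right]$ is not controlled by the weighted supremum (take $Y(t)=e^{-\beta t/2}$, $Z=K=0$: the supremum equals $1$ while the integral diverges). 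Consequently the delayed and mean-field $Y$-terms produced by the Lipschitz bound cannot be dominated by $\|(\Delta Y,\Delta Z,\Delta K)\|^2$, and $\Phi$ is not a contraction on $(\mathcal{L},\|\cdot\|)$ as defined. The repair is standard and is implicit in the paper: work with the augmented norm that also carries $\mathbb{E}\left[\int_0^\infty e^{\beta t}|Y(t)|^2\,dt\right]$ (precisely the quantity appearing in $(\ref{1.3})$), which the left-hand side of your It\^{o} identity controls for the output with coefficient $\beta-\tfrac{6C^2}{\epsilon}-\tfrac{1}{2}>0$ after absorption; with that norm your shift estimates and the contraction do close. A minor further point: the $\tfrac{1}{2}$ in the threshold $\beta>\tfrac{6C^2}{\epsilon}+\tfrac{1}{2}$ is not a Burkholder--Davis--Gundy constant; in the paper it absorbs the coefficient $6\epsilon(2+e^{-\beta r})<\tfrac{1}{2}$ multiplying the integrated $|\bar{Y}|^2$ term, the martingale inequality being invoked only afterwards to pass to the supremum.
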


\begin{proof}
\textbf{1.Uniqueness}

We assume that we have two solutions, $(Y_{1},Z_{1},K_{1}),$ $%
(Y_{2},Z_{2},K_{2}),$ and we note that
\begin{equation*}
(\bar{Y},\bar{Z},\bar{K})=(Y_{1}-Y_{2},Z_{1}-Z_{2},K_{1}-K_{2}).
\end{equation*}

In what follows, we use the simplified notation:
\begin{equation*}
f(t)=f(t,Y_{t},Z_{t},K_{t}(\cdot ),\mathbb{E}[Y(t)],\mathbb{E}[Z(t)],\mathbb{
E}[K(t,\cdot )]).
\end{equation*}
Applying It\^{o}'s formula to $e^{\beta t}\left\vert \bar{Y}(t)\right\vert
^{2},$ we get for all $%
\begin{array}{c}
0<t<T\text{ (constant)}<\infty ,%
\end{array}%
$%
\begin{eqnarray}
&&%
\begin{array}{l}
e^{\beta T}\left\vert \bar{Y}(T)\right\vert ^{2}-e^{\beta t}\left\vert \bar{Y%
}(t)\right\vert ^{2}\leq \int_{t}^{T}e^{\beta s}\left\{ \beta \left\vert
\bar{Y}(s)\right\vert ^{2}+\left\vert \bar{Z}(s)\right\vert ^{2}+\int_{%
\mathbb{R}
_{0}}\left\vert \bar{K}(s,\xi )\right\vert ^{2}\nu \left( d\xi \right)
\right\} ds \\
-2\int_{t}^{T}e^{\beta s}\left\vert \bar{Y}(s)\right\vert \cdot \left\vert
\bar{f}(s)\right\vert ds+2\int_{t}^{T}e^{\beta s}\left\vert \bar{Y}%
(s)\right\vert \cdot \left\vert \bar{Z}(s)\right\vert dB(s)%
\end{array}
\notag \\
&&%
\begin{array}{c}
+2\int_{t}^{T}\int_{
\mathbb{R}
_{0}}e^{\beta s}\left\vert \bar{Y}(s)\right\vert \cdot \left\vert \bar{K}%
(s,\xi )\right\vert \tilde{N}\left( ds,d\xi \right) .%
\end{array}
\label{1.4}
\end{eqnarray}

After taking expectation, using Lipschitz condition and the fact that $%
\begin{array}{c}
2ab\leq \frac{a^{2}}{\epsilon }+\epsilon b^{2},%
\end{array}%
$ for all $a,b$ $\in
\mathbb{R}
,$ we obtain%
\begin{align}
& \mathbb{E}\left[ e^{\beta t}\lvert \bar{Y}(t)\rvert ^{2}\right] +\mathbb{E}%
\left[ \int_{t}^{T}e^{\beta s}\left\{ \beta \left\vert \bar{Y}(s)\right\vert
^{2}+\lvert \bar{Z}(s)\rvert ^{2}+\int_{%
\mathbb{R}
_{0}}\left\vert \bar{K}(s,\xi )\right\vert ^{2}\nu \left( d\xi \right)
\right\} ds\right]  \notag \\
& =\mathbb{E}\left[ e^{\beta T}\lvert \bar{Y}(T)\rvert ^{2}\right] +2\mathbb{%
E}\left[ \int_{t}^{T}e^{\beta s}\left\vert \bar{Y}(s)\right\vert \cdot
\left\vert \bar{f}(s)\right\vert ds\right]  \notag \\
& \leq \mathbb{E}\left[ e^{\beta T}\lvert \bar{Y}(T)\rvert ^{2}\right] +%
\dfrac{6C^{2}}{\epsilon }\mathbb{E}\left[ \int_{t}^{T+r}e^{\beta
s}\left\vert \bar{Y}(s)\right\vert ^{2}ds\right]  \notag \\
& +6\epsilon (1+e^{-\beta r})\mathbb{E}\left[ \int_{t}^{T+r}e^{\beta
s}\left\vert \bar{Y}(s)\right\vert ^{2}ds\right] +6\epsilon \mathbb{E}\left[
\int_{t}^{T+r}e^{\beta s}\left( \mathbb{E}\left[ \left\vert \bar{Y}%
(s)\right\vert \right] \right) ^{2}ds\right]  \notag \\
& +6\epsilon (1+e^{-\beta r})\mathbb{E}\left[ \int_{t}^{T+r}e^{\beta
s}\left\vert \bar{Z}(s)\right\vert ^{2}ds\right] +6\epsilon \mathbb{E}\left[
\int_{t}^{T+r}e^{\beta s}\left( \mathbb{E}\left[ \left\vert \bar{Z}%
(s)\right\vert \right] \right) ^{2}ds\right]  \notag \\
& +6\epsilon (1+e^{-\beta r})\mathbb{E}\left[ \int_{t}^{T+r}\int_{%
\mathbb{R}
_{0}}e^{\beta s}\left\vert \bar{K}(s,\xi )\right\vert ^{2}\nu \left( d\xi
\right) ds\right]  \notag \\
& +6\epsilon \mathbb{E}\left[ \int_{t}^{T+r}e^{\beta s}\left( \mathbb{E}%
\left[ \int_{%
\mathbb{R}
_{0}}\left\vert \bar{K}(s,\xi )\right\vert ^{2}\nu \left( d\xi \right) %
\right] \right) ^{2}ds\right] ,  \label{1.5}
\end{align}%
where we have used by changing of variables: $u:=s+r$%
\begin{eqnarray}
\int_{t}^{T}e^{\beta s}\left\vert \bar{Y}(s+r)\right\vert ^{2}ds
&=&\int_{t+r}^{T+r}e^{\beta (u-r)}\left\vert \bar{Y}(u)\right\vert ^{2}du
\label{1.6} \\
&=&e^{-\beta r}\int_{t+r}^{T+r}e^{\beta u}\left\vert \bar{Y}(u)\right\vert
^{2}du  \notag \\
&\leq &e^{-\beta r}\int_{t}^{T+r}e^{\beta s}\left\vert \bar{Y}(s)\right\vert
^{2}ds,  \notag
\end{eqnarray}%
and similarly for $Z$ and $K.$

The fact that
\begin{equation}
\begin{array}{c}
\mathbb{E}\left[ \int_{t}^{T+r}e^{\beta s}\left( \mathbb{E}\left[ \left\vert
\bar{Y}(s)\right\vert \right] \right) ^{2}ds\right] \leq \mathbb{E}\left[
\int_{t}^{T+r}e^{\beta s}\left\vert \bar{Y}(s)\right\vert ^{2}ds\right] ,%
\text{ where }\bar{Y}=\bar{Y},\bar{Z},\bar{K},%
\end{array}%
\end{equation}%
gives that%
\begin{align}
& \mathbb{E}\left[ e^{\beta t}\lvert \bar{Y}(t)\rvert ^{2}\right] +\mathbb{E}%
\left[ \int_{t}^{T}e^{\beta s}\left\{ \beta \left\vert \bar{Y}(s)\right\vert
^{2}+\lvert \bar{Z}(s)\rvert ^{2}+\int_{%
\mathbb{R}
_{0}}\left\vert \bar{K}(s,\xi )\right\vert ^{2}\nu \left( d\xi \right)
\right\} ds\right]  \notag \\
& \leq \mathbb{E}\left[ e^{\beta T}\lvert \bar{Y}(T)\rvert ^{2}\right]
+\left( \dfrac{6C^{2}}{\epsilon }+6\epsilon (2+e^{-\beta r})\right) \mathbb{E%
}\left[ \int_{t}^{T+r}e^{\beta s}\left\vert \bar{Y}(s)\right\vert ^{2}ds%
\right]  \notag \\
& +6\epsilon (2+e^{-\beta r})\mathbb{E}\left[ \int_{t}^{T+r}e^{\beta
s}\left\vert \bar{Z}(s)\right\vert ^{2}ds\right]  \notag \\
& +6\epsilon (2+e^{-\beta r})\mathbb{E}\left[ \int_{t}^{T+r}\int_{%
\mathbb{R}
_{0}}e^{\beta s}\left\vert \bar{K}(s,\xi )\right\vert ^{2}\nu \left( d\xi
\right) ds\right] .
\end{align}

Taking $\epsilon $ such $6\epsilon (2+e^{-\beta r})<\dfrac{1}{2},$ then
since $\beta >\dfrac{6C^{2}}{\epsilon }+\dfrac{1}{2},$ we have for $t<T,$%
\begin{equation}
\mathbb{E}\left[ e^{\beta t}\lvert \bar{Y}(t)\rvert ^{2}\right] \leq \mathbb{%
E}\left[ e^{\beta T}\lvert \bar{Y}(T)\rvert ^{2}\right] .
\end{equation}

Replacing $\beta $ by $\beta ^{\prime }$ with, $\beta >\beta ^{\prime }>%
\dfrac{6C^{2}}{\epsilon }+\dfrac{1}{2},$ then
\begin{equation}
\mathbb{E}\left[ e^{\beta ^{\prime }t}\lvert \bar{Y}(t)\rvert ^{2}\right]
\leq e^{(\beta ^{\prime }-\beta )T}\mathbb{E}\left[ e^{\beta T}\lvert \bar{Y}%
(T)\rvert ^{2}\right] .  \label{1.7}
\end{equation}

The second factor on the right hand side is bounded by condition $\left( \ref%
{1.3}\right) $ as $T\rightarrow \infty ,$ while the first factor tends to $%
0. $

The uniqueness is proved.

\textbf{2.Existence}

The existence proof is given into two steps, for all $n\in
\mathbb{N}
.$ Let us construct the solution $(Y^{n},Z^{n},K^{n})$ of the infinite
horizon DBSDE
\begin{equation*}
\begin{array}{c}
Y^{n}(t)=\int_{t}^{n}f(s,Y_{s}^{n},Z_{s}^{n},K_{s}^{n}(\cdot ),\mathbb{E}%
\left[ Y^{n}(s)\right] ,\mathbb{E}\left[ Z^{n}(t)\right] ,\mathbb{E}\left[
K^{n}(s,\cdot )\right] )ds \\
-\int_{t}^{n}Z^{n}(s)dB(s)-\int_{t}^{n}\int_{%
\mathbb{R}
_{0}}K^{n}(s,\xi )\tilde{N}\left( ds,d\xi \right) ;t\geq 0,%
\end{array}%
\end{equation*}%
as follows:

\begin{itemize}
\item \textbf{Step 1:} It is decomposed into two cases according to $t.$
\end{itemize}

\textbf{Case 1:} Let $0\leq t\leq n,$ then $(Y^{n},Z^{n},K^{n})$ is defined
as a solution of the DBSDE in $[0,n]$ as follows:%
\begin{equation*}
\begin{array}{c}
Y^{n}(t)=\int_{t}^{n}f(s,Y_{s}^{n},Z_{s}^{n},K_{s}^{n}(\cdot ),\mathbb{E}%
\left[ Y^{n}(s)\right] ,\mathbb{E}\left[ Z^{n}(t)\right] ,\mathbb{E}\left[
K^{n}(s,\cdot )\right] )ds \\
-\int_{t}^{n}Z^{n}(s)dB(s)-\int_{t}^{n}\int_{
\mathbb{R}
_{0}}K^{n}(s,\xi )\tilde{N}\left( ds,d\xi \right) ;0\leq t\leq n.%
\end{array}%
\end{equation*}

\textbf{Case 2:} For $t\geq n,$ $(Y^{n},Z^{n},K^{n})$ is defined by%
\begin{equation*}
\begin{array}{ll}
Y^{n}(t):=0, & t>n, \\
Z^{n}(t):=0, & t>n, \\
K^{n}(t,\cdot ):=0, & t>n.%
\end{array}%
\end{equation*}

We will first establish an a priori estimate for the sequence $%
(Y^{n},Z^{n},K^{n})$.

Adding and subtracting $\mathbb{E}\left[ \int_{t}^{\infty }e^{\beta
s}\left\vert Y(s)\right\vert \cdot \left\vert f(s,0,0,0,0,0,0)\right\vert ds%
\right] $ in $\left( \ref{1.2}\right) $ and by using $2ab\leq \frac{a^{2}}{%
\epsilon }+\epsilon b^{2},$ for all $a,b$ $\in
\mathbb{R}
,$ we get%
\begin{equation*}
\begin{array}{l}
\mathbb{E}\left[ e^{\beta t}\left\vert Y^{n}(t)\right\vert ^{2}\right] +%
\mathbb{E}\left[ \int_{t}^{\infty }e^{\beta s}\left\{ \bar{\beta}\left\vert
Y^{n}(s)\right\vert ^{2}+\bar{\alpha}\left\vert Z^{n}(s)\right\vert ^{2}+%
\bar{\gamma}\int_{
\mathbb{R}
_{0}}\left\vert K(s,\xi )\right\vert ^{2}\nu \left( d\xi \right) \right\} ds%
\right] \\
\leq \dfrac{1}{\epsilon }\mathbb{E}\left[ \int_{t}^{\infty }e^{\beta
s}\left\vert f(s,0,0,0,0,0,0)\right\vert ^{2}ds\right]%
\end{array}%
\end{equation*}%
with
\begin{equation}
\begin{array}{l}
\bar{\beta}:=\beta -\dfrac{6C^{2}}{\epsilon }-\dfrac{1}{2}>0, \\
\bar{\alpha}:=1-6\epsilon (2+e^{-\beta r})>0, \\
\bar{\gamma}:=1-6\epsilon (2+e^{-\beta r})>0,%
\end{array}
\label{*}
\end{equation}%
where we have taking that $6\epsilon (2+e^{-\beta r})<1.$

Using $\left( \ref{*}\right) $ and the martingale inequality, we have
\begin{equation*}
\begin{array}{l}
\mathbb{E}\left[ \underset{t\geq s}{\sup }e^{\beta t}\left\vert
Y^{n}(t)\right\vert ^{2}\right] +\mathbb{E}\left[ \int_{s}^{\infty
}e^{\beta r}\left\{ \left\vert Y^{n}(r)\right\vert ^{2}+\left\vert
Z^{n}(r)\right\vert ^{2}+\int_{
\mathbb{R}
_{0}}\left\vert K^{n}(r,\xi )\right\vert ^{2}\nu \left( d\xi \right)
\right\} dr\right] \\
\leq \dfrac{1}{\epsilon }\mathbb{E}\left[ \int_{s}^{\infty }e^{\beta
r}\left\vert f(r,0,0,0,0,0,0)\right\vert ^{2}dr\right] .%
\end{array}%
\end{equation*}

\begin{itemize}
\item \textbf{Step 2:} Let $m>n,$ define
\begin{equation*}
\begin{array}{l}
\triangle Y(t)=Y^{m}(t)-Y^{n}(t), \\
\triangle Z(t)=Z^{m}(t)-Z^{n}(t), \\
\triangle K(t,\cdot )=K^{m}(t,\cdot )-K^{n}(t,\cdot ).%
\end{array}%
\end{equation*}
\end{itemize}

According to $t,$ we have also two cases.

\textbf{Case 1:} For $n\leq t\leq m,$ we have%
\begin{eqnarray*}
\triangle Y(t) &=&\int_{t}^{m}f(s,Y_{s}^{m},Z_{s}^{m},K_{s}^{m}(\cdot ),%
\mathbb{E}\left[ Y^{m}(s)\right] ,\mathbb{E}\left[ Z^{m}(s)\right] ,\mathbb{E%
}\left[ K^{m}(s,\cdot )\right] )ds \\
&&-\int_{t}^{m}\triangle Z(s)dB(s)-\int_{t}^{m}\int_{%
\mathbb{R}
_{0}}\triangle K(s,\xi )\tilde{N}\left( ds,d\xi \right) .
\end{eqnarray*}

Then by It\^{o}'s formula, we get%
\begin{equation*}
\begin{array}{l}
\mathbb{E}\left[ e^{\beta t}\left\vert \triangle Y(t)\right\vert ^{2}\right]
+\mathbb{E}\left[ \int_{t}^{m}e^{\beta s}\left\{ \beta \left\vert \triangle
Y(s)\right\vert ^{2}+\left\vert \triangle Z(s)\right\vert ^{2}+\int_{%
\mathbb{R}
_{0}}\left\vert K^{n}(s,\xi )\right\vert ^{2}\nu \left( d\xi \right)
\right\} ds\right] \\
=2\mathbb{E}\left[ \int_{t}^{m}e^{\beta s}\left\vert \triangle
Y(s)\right\vert \cdot \right. \\
\left. \cdot \left\vert \triangle f(s,Y_{s}^{m},Z_{s}^{m},K_{s}^{m}(\cdot ),%
\mathbb{E}\left[ Y^{m}(s)\right] ,\mathbb{E}\left[ Z^{m}(s)\right] ,\mathbb{E%
}\left[ K^{m}(s,\cdot )\right] )\right\vert ds\right] \\
\leq 2C\mathbb{E}\left[ \int_{t}^{m}e^{\beta s}\left\vert Y(s)\right\vert
\cdot \left\vert \triangle f(s)\right\vert ds\right] \\
+2\mathbb{E}\left[ \int_{t}^{m}e^{\beta s}\left\vert \triangle
Y(s)\right\vert \cdot \left\vert \triangle f(s,0,0,0,0,0,0)\right\vert ds%
\right] .%
\end{array}%
\end{equation*}

We deduce, by the same argument used before, that%
\begin{equation*}
\begin{array}{l}
\mathbb{E}\left[ \underset{n\leq t\leq m}{\sup }e^{\beta t}\left\vert
\triangle Y(t)\right\vert ^{2}\right] +\mathbb{E}\left[ \int_{n}^{m}e^{%
\beta s}\left\{ \left\vert \triangle Y(s)\right\vert ^{2}+\left\vert
\triangle Z(s)\right\vert ^{2}+\int_{
\mathbb{R}
_{0}}\left\vert \triangle K(s,\xi )\right\vert ^{2}\nu \left( d\xi \right)
\right\} ds\right] \\
\leq \dfrac{1}{\epsilon }\mathbb{E}\left[ \int_{n}^{\infty }e^{\beta
s}\left\vert \triangle f(s,0,0,0,0,0,0)\right\vert ^{2}ds\right] .%
\end{array}%
\end{equation*}

The last inequality goes to zero as $n$ goes to infinity.

\textbf{Case 2:} For $t\leq n,$ we have%
\begin{eqnarray*}
\triangle Y(t) &=&\triangle Y(n)+\int_{t}^{n}\left\{
f(s,Y_{s}^{m},Z_{s}^{m},K_{s}^{m}(\cdot ),\mathbb{E}\left[ Y^{m}(s)\right] ,%
\mathbb{E}\left[ Z^{m}(s)\right] ,\mathbb{E}\left[ K^{m}(s,\cdot )\right]
)\right. \\
&&\left. -f(s,Y_{s}^{n},Z_{s}^{n},K_{s}^{m}(\cdot ),\mathbb{E}\left[ Y^{n}(s)%
\right] ,\mathbb{E}\left[ Z^{n}(s)\right] ,\mathbb{E}\left[ K^{m}(s,\cdot )%
\right] )\right\} ds \\
&&-\int_{t}^{n}\triangle Z(s)dB(s)-\int_{t}^{m}\int_{%
\mathbb{R}
_{0}}\triangle K(s,\xi )\tilde{N}\left( ds,d\xi \right) .
\end{eqnarray*}

We proceed as in $\left( \ref{1.5}\right) -\left( \ref{1.7}\right) ,$ in the
proof of uniqueness, we obtain%
\begin{eqnarray*}
\mathbb{E}\left[ e^{\beta t}\left\vert \triangle Y(t)\right\vert ^{2}\right]
&\leq &\mathbb{E}\left[ e^{\beta n}\left\vert \triangle Y(n)\right\vert ^{2}%
\right] \\
&\leq &\dfrac{1}{\epsilon }\mathbb{E}\left[ \int_{n}^{\infty }e^{\beta
s}\left\vert \triangle f(s,0,0,0,0,0,0)\right\vert ^{2}ds\right] .
\end{eqnarray*}

Then the sequence $(Y^{n},Z^{n},K^{n})$ forms a Cauchy sequence for the norm
of the space $\mathcal{L}$ in $\left( \ref{1.2}\right) $ and that the limit $%
(Y,Z,K)$ is a solution of a MFDBSDE $\left( \ref{1.1}\right) .$
\end{proof}

\end{document}